\pgfplotsset{compat=1.18}
\newcommand{\Mod}[1]{\ (\mathrm{mod}\ #1)}
\definecolor{uuuuuu}{rgb}{0.27,0.27,0.27}
\definecolor{sqsqsq}{rgb}{0.1255,0.1255,0.1255}
\newtheorem{definition}{Definition} [section]
\newtheorem{theorem}[definition]{Theorem}
\newtheorem{lemma}[definition]{Lemma}
\newtheorem{proposition}[definition]{Proposition}
\newtheorem{corollary}[definition]{Corollary}
\newtheorem{claim}[definition]{Claim}
\newtheorem{fact}[definition]{Fact}
\begin{document}
\title{\bf\Large Positive codegree Andr{\'a}sfai--Erd\H{o}s--S\'{o}s theorem for the generalized triangle}
\date{\today}
\author[1]{Xizhi Liu\thanks{Research supported by ERC Advanced Grant 101020255. Email: \texttt{xizhi.liu.ac@gmail.com}}}
\author[2]{Sijie Ren\thanks{Email: \texttt{rensijie1@126.com}}}
\author[2]{Jian Wang\thanks{Research supported by National Natural Science Foundation of China No. 12471316. Email: \texttt{wangjian01@tyut.edu.cn}}}
\affil[1]{Mathematics Institute and DIMAP,
            University of Warwick,
            Coventry, CV4 7AL, UK}
\affil[2]{Department of Mathematics, 
            Taiyuan University of Technology, Taiyuan, 030024, China}
\maketitle
\begin{abstract}
The celebrated Andr{\'a}sfai--Erd\H{o}s--S\'{o}s Theorem from 1974 shows that every $n$-vertex triangle-free graph with minimum degree greater than $2n/5$ must be bipartite.

We establish a positive codegree extension of this result for the $r$-uniform generalized triangle $\mathrm{T}_{r} = \left\{\{1,\ldots, r-1,r\}, \{1,\ldots, r-1,r+1\},\{r,r+1, \ldots, 2r-1\}\right\}$$\colon$ For every $n \ge (r-1)(2r+1)/2$, if $\mathcal{H}$ is an $n$-vertex $\mathrm{T}_{r}$-free $r$-uniform hypergraph in which each $(r-1)$-tuple of vertices is contained in either zero edges or more than $2n/(2r+1)$ edges of $\mathcal{H}$, then $\mathcal{H}$ is $r$-partite. 
This result provides the first tight positive codegree Andr{\'a}sfai--Erd\H{o}s--S\'{o}s type theorem for hypergraphs.
It also immediately implies that the positive codegree Tur\'{a}n number of $\mathrm{T}_{r}$ is $\lfloor n/r \rfloor$ for all $r$. 
Additionally, for $r=3$, our result answers one of the questions posed by Hou et al.~\cite{HLYZZ22} in a strong form.

\medskip

\noindent\textbf{Keywords:} Andr{\'a}sfai--Erd\H{o}s--S\'{o}s theorem, generalized triangle, cancellative hypergraph, positive codegree
\end{abstract}
\section{Introduction}\label{SEC:Intorduction}
Given an integer $r\ge 2$, an \textbf{$r$-uniform hypergraph} (henceforth \textbf{$r$-graph}) $\mathcal{H}$ is a collection of $r$-subsets of some finite set $V$.
We identify a hypergraph $\mathcal{H}$ with its edge set and use $V(\mathcal{H})$ to denote its vertex set. 
The size of $V(\mathcal{H})$ is denoted by $v(\mathcal{H})$. 
The \textbf{degree} $d_{\mathcal{H}}(v)$ of a vertex $v$ in $\mathcal{H}$ is the number of edges containing $v$. 
We use $\delta(\mathcal{H})$, $\Delta(\mathcal{H})$, and $d(\mathcal{H})$ to denote the \textbf{minimum}, \textbf{maximum}, and \textbf{average degree} of $\mathcal{H}$, respectively.
The \textbf{shadow} $\partial\mathcal{H}$ of $\mathcal{H}$ is defined as 
\begin{align*}
    \partial\mathcal{H}
    \coloneqq 
    \left\{e\in \binom{V(\mathcal{H})}{r-1} \colon \text{there exists $E\in \mathcal{H}$ with $e\subseteq E$}\right\}.
\end{align*}
For every $e\in \partial\mathcal{H}$, the \textbf{neighborhood} $N_{\mathcal{H}}(e)$ of $e$ in $\mathcal{H}$ is defined as 
\begin{align*}
    N_{\mathcal{H}}(e)
    \coloneqq \left\{v\in V(\mathcal{H}) \colon e\cup \{v\} \in \mathcal{H}\right\}. 
\end{align*}
Following the definition of Balogh--Lemons--Palmer~\cite{BLP21}, the \textbf{minimum positive codegree}\footnote{This is also referred to as the \textbf{minimum shadow degree} in recent work~\cite{FW24}.} of $\mathcal{H}$ is defined as
\begin{align*}
    \delta_{r-1}^{+}(\mathcal{H})
    \coloneqq 
    \min\left\{|N_{\mathcal{H}}(e)| \colon e \in \partial \mathcal{H}\right\}.
\end{align*}
Note that in the case $r=2$, the minimum positive codegree of a graph without isolated vertices is equal to its minimum degree.

In a seminal work~\cite{AES74}, Andr\'{a}sfai--Erd\H{o}s--S\'{o}s showed that for $\ell \ge 2$, every $K_{\ell+1}$-free graph $G$ on $n$ vertices with minimum degree greater than $\frac{3\ell-4}{3\ell-1} n$ must be $\ell$-partite. Moreover, the bound $\frac{3\ell-4}{3\ell-1} n$ is tight. Extensions of this result to hypergraphs have been explored in several prior works such as~\cite{FS05,KS05,FPS06Book,LMR23unif,HLZ24,CL24}, though the first tight result for hypergraphs was established only recently in~\cite{LRW24} for the $3$-uniform generalized triangle $\mathrm{T}_{3}$.
Here, for $r \ge 2$, the $r$-uniform generalized triangle $\mathrm{T}_{r}$ is defined as
\begin{align*}
    \mathrm{T}_{r} 
    \coloneqq \left\{\{1,\ldots, r-1,r\}, \{1,\ldots, r-1,r+1\},\{r,r+1, \ldots, 2r-1\}\right\},
\end{align*}
which is a classic example that has been extensively studied by numerous researchers since the 1960s (see e.g.~\cite{Bol74,FF83F5,Sid87,FF89,She96,KM04Cancel,Goldwasser,Pik08,BM12,KLM14,BBHLM16,NY18,Liu21Cancel,LM21feasbile,HLP22,BCL22b,LM23KK,Liu24Steiner,CILLP24}).
\begin{theorem}[{\cite[Theorem~1.1]{LRW24}}]\label{THM:AES-F5-degree}
    For $n \ge 5000$, every $n$-vertex $F_5$-free $3$-graph with $\delta(\mathcal{H}) > \frac{4n^2}{45}$ is $3$-partite.
\end{theorem}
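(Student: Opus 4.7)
The plan is to combine a stability result for $F_5$-free $3$-graphs with a local argument driven by the minimum degree hypothesis. First I would invoke a stability theorem of Frankl--F\"{u}redi type: since $\delta(\mathcal{H}) > 4n^2/45$ gives $e(\mathcal{H}) > 4n^3/135$, a constant fraction of the Tur\'{a}n number $\mathrm{ex}(n,F_5) \sim n^3/27$, there exists a $3$-partition $V(\mathcal{H}) = V_1 \cup V_2 \cup V_3$ such that all but $o(n^3)$ edges are transversal (exactly one vertex per part), each $|V_i| = n/3 + o(n)$, and the partition is chosen to maximize the number of transversal edges.

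Next I would clean up exceptional vertices. Each vertex $v \in V_i$ has link $L_v \coloneqq \{\{x,y\} : \{v,x,y\} \in \mathcal{H}\}$ of size $> 4n^2/45$, and using the optimality of the partition together with an averaging argument, almost all of $L_v$ must be supported on pairs in $V_j \times V_k$, where $\{j,k\} = \{1,2,3\} \setminus \{i\}$. If some vertex violated this to an excessive extent, relocating it to a different part would strictly increase the transversal edge count, contradicting optimality. The minimum degree hypothesis is again essential here to force the error term to be genuinely small for \emph{every} vertex, not just most of them.

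The decisive step is to rule out \emph{bad} edges, i.e.\ edges meeting some $V_i$ in two or more vertices. Suppose for contradiction that $E_0 = \{a,b,c\}$ is a bad edge with $a,b \in V_1$. The crucial $F_5$-free observation is that if a pair $\{x,y\}$ lies in both $L_a$ and $L_b$ with $c \notin \{x,y\}$, then $\{a,x,y\}, \{b,x,y\}, \{a,b,c\}$ is a copy of $F_5$; hence $L_a \cap L_b$ consists only of pairs containing $c$, so $|L_a \cap L_b| \le n-1$. On the other hand, by the cleaning step $L_a$ and $L_b$ are supported almost entirely inside $V_2 \times V_3$, a set of size $\sim n^2/9$, so inclusion-exclusion yields
\[ |L_a \cap L_b| \;\ge\; |L_a| + |L_b| - |V_2 \times V_3| - o(n^2) \;>\; \tfrac{8n^2}{45} - \tfrac{n^2}{9} - o(n^2) \;=\; \tfrac{n^2}{15} - o(n^2), \]
which vastly exceeds $n-1$, a contradiction. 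Thus $\mathcal{H}$ has no bad edges and is $3$-partite. The numerical constant $4/45$ is exactly what makes this counting tight, since $2 \cdot (4/45) > 1/9$ is precisely the inequality driving the contradiction.

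The hard part will be the cleaning step, namely making the partition sharp enough that $L_a$ and $L_b$ truly concentrate on $V_2 \times V_3$ with error $o(n^2)$ \emph{uniformly} in the choice of $a,b$. A naive stability conclusion leaves $O(\varepsilon n)$ exceptional vertices, whose link graphs could contribute $O(\varepsilon n^2)$ spurious pairs outside $V_2 \times V_3$ and spoil the inclusion-exclusion. Resolving this likely requires an iterative potential-based refinement of the partition, combined with a second use of the minimum degree condition to rule out exceptional vertices altogether before running the contradiction argument; careful bookkeeping of how adjustments to the partition interact with the $F_5$-free constraint is where the real work lies.
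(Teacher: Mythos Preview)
The paper does not contain a proof of this statement. Theorem~\ref{THM:AES-F5-degree} is quoted verbatim from~\cite{LRW24} as background motivation; the present paper proves only the positive-codegree analogue, Theorem~\ref{THM:main-AES-Tr-codegree}, whose argument (Proposition~\ref{PROP:Tr-free-implies-Sigma-r-free}, Lemma~\ref{LEMMA:F5-3-indep-sets}, and the direct combinatorial case analysis in Section~\ref{SEC:proof-main}) is tailored to the codegree hypothesis and does not go through stability at all. So there is no ``paper's own proof'' of Theorem~\ref{THM:AES-F5-degree} here to compare your proposal against.

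On the substance of your sketch: the stability-plus-cleaning template is indeed the standard route for minimum-degree Andr\'asfai--Erd\H{o}s--S\'os results, and your key contradiction step (that $L_a\cap L_b$ is forced to be large by inclusion--exclusion yet tiny by the $F_5$-free condition) is the right idea. Two cautions, though. First, your remark that ``$4/45$ is exactly what makes this counting tight, since $2\cdot(4/45)>1/9$'' overstates things: any $c>1/18$ satisfies $2c>1/9$, so the final counting alone would give a better constant; the true bottleneck for $4/45$ is the construction side (a balanced blowup of a non-$3$-partite $F_5$-free $3$-graph achieving $\delta\approx 4n^2/45$), and on the proof side the threshold is determined by where the stability and cleaning steps can be pushed, not by the last inequality. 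Second, with $\delta>4n^2/45$ one only has $e(\mathcal{H})>4n^3/135$, which is $4/5$ of the extremal number, so invoking a generic Frankl--F\"uredi stability statement is not automatic; the actual argument in~\cite{LRW24} has to do substantially more work than a one-line appeal to stability, and your acknowledgement that the cleaning step is ``the hard part'' is accurate but understates how much of the proof lives there.
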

In this note, we consider the minimum positive codegree extension of the Andr\'{a}sfai--Erd\H{o}s--S\'{o}s Theorem for $\mathrm{T}_{r}$$\colon$ 
What is the smallest real number $\alpha_{r}$ such that every $n$-vertex $\mathrm{T}_{r}$-free $r$-graph with $\delta_{r-1}^{+}(\mathcal{H}) > \alpha_{r} n$ is $r$-partite?
The Andr\'{a}sfai--Erd\H{o}s--S\'{o}s Theorem shows that $\alpha_2 = \frac{2}{5}$. 
A classical result by Frankl--F\"{u}redi~\cite{FF83F5} implies that $\alpha_{3} \le \frac{1}{3}$. Similar upper bound for general $\alpha_r$ can be obtained from results in~\cite{Sid87,Pik08,FF89}. 
In the following theorem, we determine the exact value of $\alpha_{r}$ for every $r\ge 3$. 
\begin{theorem}\label{THM:main-AES-Tr-codegree}
    For $n \ge \frac{(r-1)(2r+1)}{2}$, every $n$-vertex $\mathrm{T}_{r}$-free $r$-graph $\mathcal{H}$ with $\delta_{r-1}^{+}(\mathcal{H}) > \frac{2n}{2r+1}$ is $r$-partite. 
\end{theorem}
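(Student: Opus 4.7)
My plan is to fix an arbitrary edge $E_0 = \{u_1, \ldots, u_r\} \in \mathcal{H}$ (assuming $\mathcal{H} \ne \emptyset$; otherwise $\mathcal{H}$ is trivially $r$-partite) and define $V_i := N_{\mathcal{H}}(E_0 \setminus \{u_i\})$ for each $i \in [r]$, so that $u_i \in V_i$ and $|V_i| > 2n/(2r+1)$. The hypothesis $n \ge (r-1)(2r+1)/2$ yields $|N_{\mathcal{H}}(S)| \ge r$ for every $S \in \partial \mathcal{H}$, a slack I will use throughout. The workhorse is the following \textbf{swap lemma}, immediate from the definition of $T_r$: for $S \in \partial \mathcal{H}$ and distinct $x, y \in N_{\mathcal{H}}(S)$, every edge $f \in \mathcal{H}$ containing $\{x, y\}$ must satisfy $f \cap S \ne \emptyset$, since otherwise $\{S \cup \{x\}, S \cup \{y\}, f\}$ forms a copy of $T_r$.

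The core step is to show \emph{pair-independence}: no edge $E \in \mathcal{H}$ contains two vertices of some single $V_i$. Suppose for contradiction distinct $x, y \in V_i \cap E$; the swap lemma applied to $E_0 \setminus \{u_i\}$ forces some $u_k \in E$ with $k \ne i$, and $x, y \notin E_0 \setminus \{u_i\}$ since $V_i \cap \{u_l : l \ne i\} = \emptyset$. The aim is to produce an edge $C$ containing $\{x, y\}$ and disjoint from $E_0 \setminus \{u_i\}$: together with $(E_0 \setminus \{u_i\}) \cup \{x\}$ and $(E_0 \setminus \{u_i\}) \cup \{y\}$, this gives a copy of $T_r$, contradicting $T_r$-freeness. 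I construct $C$ iteratively. Starting from $E' := E$, while $|E' \cap E_0| \ge 2$, pick any $u_{k'} \in E' \cap (E_0 \setminus \{u_i\})$ and replace $E'$ by $E'' := (E' \setminus \{u_{k'}\}) \cup \{z\}$ for some $z \in N_{\mathcal{H}}(E' \setminus \{u_{k'}\}) \setminus E_0$; such $z$ exists because $|N_{\mathcal{H}}(E' \setminus \{u_{k'}\}) \setminus E_0| \ge r - (r - |E' \cap E_0| + 1) = |E' \cap E_0| - 1 \ge 1$. Each step preserves $\{x, y\} \subseteq E'$ and reduces $|E' \cap E_0|$ by one, so iteration terminates at $|E' \cap E_0| = 1$. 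If the surviving element is $u_i$, set $C := E'$; otherwise a final application of the codegree condition to the $(r-1)$-set $E' \setminus \{u_{k'}\}$ produces $z' \in N_{\mathcal{H}}(E' \setminus \{u_{k'}\}) \setminus (E_0 \setminus \{u_i\})$ (possible because $|N_\mathcal{H}| - (r-1) \ge 1$), and $C := (E' \setminus \{u_{k'}\}) \cup \{z'\}$ completes the contradiction.

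Pair-independence immediately yields pairwise disjointness of the $V_i$'s: if $v \in V_i \cap V_j$ for $i \ne j$, then the edge $(E_0 \setminus \{u_i\}) \cup \{v\}$ contains both $u_j$ and $v$, both of which lie in $V_j$, contradicting pair-independence of $V_j$. Coverage of every non-isolated vertex follows by re-running the construction with an edge $e \ni w$ in place of $E_0$, obtaining a partition $V_1^{(e)}, \ldots, V_r^{(e)}$; whenever $u \in V_i^{(e)} \cap V_j$, pair-independence applied to the edge $(e \setminus \{v_i\}) \cup \{u\}$ forces $j$ to equal the unique "missing slot" of this edge with respect to the $E_0$-partition, independently of $u$, so each $V_i^{(e)}$ embeds into some $V_{\sigma(i)}$ via a bijection $\sigma$, placing $w$ in some $V_i$. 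Finally, disjointness, coverage, and pair-independence yield $|E \cap V_i| \le 1$ for every edge $E$ and every $i$, while $\sum_i |E \cap V_i| = |E| = r$, forcing $|E \cap V_i| = 1$ for all $i$ and establishing that $\mathcal{H}$ is $r$-partite. The main technical hurdle is the iterative reduction in the pair-independence step; the tightness of $n \ge (r-1)(2r+1)/2$ provides exactly the slack needed to extract a fresh extension vertex at each iteration.
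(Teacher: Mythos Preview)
Your pair-independence step (the iterative reduction producing an edge $C\supseteq\{x,y\}$ disjoint from $E_0\setminus\{u_i\}$) is correct and is essentially the content of the paper's Proposition~\ref{PROP:Tr-free-implies-Sigma-r-free} and Lemma~\ref{LEMMA:F5-3-indep-sets}; disjointness of the $V_i$'s follows as you say.

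The genuine gap is in the coverage paragraph. You fix $V_i = N_{\mathcal{H}}(E_0\setminus\{u_i\})$ and, for a non-isolated $w$, take an edge $e=\{v_1,\dots,v_r\}\ni w$ and form $V_i^{(e)}$. Your ``missing slot'' argument asserts that for $u\in V_i^{(e)}\cap V_j$, pair-independence applied to $F=(e\setminus\{v_i\})\cup\{u\}$ pins down $j$ as the one index not used by $v_1,\dots,\hat v_i,\dots,v_r$. But this is only well-defined if each $v_k$ with $k\ne i$ already lies in some $V_l$, which is exactly the coverage statement you are trying to prove. Even if one grants that $V_i^{(e)}$ can meet at most one $V_j$, you have only shown $V_i^{(e)}\cap\bigl(\bigcup_l V_l\bigr)\subseteq V_{\sigma(i)}$; concluding $w\in V_{\sigma(i)}$ from $w\in V_i^{(e)}$ again presupposes $w\in\bigcup_l V_l$. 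The argument is circular. A further red flag is that your coverage step never invokes the full hypothesis $\delta_{r-1}^{+}(\mathcal{H})>2n/(2r+1)$ (only the consequence $\delta_{r-1}^{+}\ge r$ is used, in the iterative reduction); since the bound $2n/(2r+1)$ is tight, any valid proof must use it in an essential way at the coverage stage.

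The paper handles coverage by a genuinely different mechanism: it does \emph{not} keep the specific sets $N_{\mathcal{H}}(E_0\setminus\{u_i\})$, but enlarges them to \emph{maximal} pairwise disjoint independent sets $V_1,\dots,V_r$. Maximality guarantees that any leftover vertex $v\in Z=V\setminus\bigcup V_i$ has a neighbour in each $V_i$; one then shows $|N_{\mathcal{H}}(v)|>2(r-1)n/(2r+1)$ and, by a counting argument comparing $|N_{\mathcal{H}}(v)|$ against two co-neighbourhoods and the size $|Z|<n/(2r+1)$, derives that at most one of the ``transversal'' link families $L_i$ can be nonempty, while a separate argument forces at least two to be nonempty---a contradiction. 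This is where the threshold $2n/(2r+1)$ is actually used. Your proof needs either a similar maximality/counting device, or an independent argument that the fixed sets $N_{\mathcal{H}}(E_0\setminus\{u_i\})$ already cover every non-isolated vertex; the latter is not immediate and, as written, is not established.
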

\textbf{Remarks.}
\begin{itemize}
    \item Theorem~\ref{THM:main-AES-Tr-codegree} does not hold for $r+1 \le n < \frac{(r-1)(2r+1)}{2}$. For example, when $2r-2 \le n < \frac{(r-1)(2r+1)}{2}$, the $r$-graph $\mathcal{H}$ consisting of the complete $r$-graph $K_{2r-2}^{r}$ on $2r-2$ vertices, along with $n-(2r-2)$ isolated vertices, is $\mathrm{T}_{r}$-free and satisfies $\delta_{r-1}^{+}(\mathcal{H}) = \delta_{r-1}^{+}(K_{2r-2}^{r}) = r-1 > \frac{2\cdot n}{2r+1}$, yet it is not $r$-partite. Constructions for smaller $n$ can be obtained similarly.   
    \item The bound $\frac{2n}{2r+1}$ in Theorem~\ref{THM:main-AES-Tr-codegree} is optimal, as shown by the following construction.
    \begin{figure}[h]
    \centering
    \tikzset{every picture/.style={line width=1pt}} 
    
    \begin{tikzpicture}[x=0.75pt,y=0.75pt,yscale=-0.8,xscale=0.8]
    %
    \draw [fill=uuuuuu, fill opacity=0.2, join = round]  (328.8,35.06) .. controls (346.33,59.4) and (343.33,98.4) .. (328.26,122.76) .. controls (356.33,95.4) and (370.33,90.4) .. (411.83,96.17) -- (328.8,35.06); 
    \draw [fill=uuuuuu, fill opacity=0.2, join = round]  (328.26,122.76) .. controls (364.33,129.4) and (391.33,121.4) .. (411.83,96.17) -- (379.38,194.02) .. controls (380.33,156.4) and (367.33,138.4) .. (328.26,122.76); 
    \draw [fill=uuuuuu, fill opacity=0.2, join = round]   (328.26,122.76) .. controls (322.33,155.4) and (309.33,178.4) .. (276.28,193.39) -- (379.38,194.02) .. controls (345.33,172.4) and (335.33,156.4) .. (328.26,122.76); 
    \draw [fill=uuuuuu, fill opacity=0.2, join = round]  (328.26,122.76) .. controls (295.33,137.4) and (281.33,156.4) .. (276.28,193.39) -- (245.03,95.15) .. controls (275.33,125.4) and (289.33,127.4) .. (328.26,122.76);
    \draw [fill=uuuuuu, fill opacity=0.2, join = round]  (245.03,95.15) .. controls (288.33,87.4) and (304.33,92.4) .. (328.26,122.76) .. controls (313.33,89.4) and (310.33,69.4) .. (328.8,35.06) -- (245.03,95.15);
    \draw [fill=uuuuuu] (411.83,96.17) circle (2.5pt);
    \draw [fill=uuuuuu] (379.38,194.02) circle (2.5pt);
    \draw [fill=uuuuuu] (276.28,193.39) circle (2.5pt);
    \draw [fill=uuuuuu] (245.03,95.15) circle (2.5pt);
    \draw [fill=uuuuuu] (328.8,35.06) circle (2.5pt);
    \draw [fill=uuuuuu] (328.26,122.76) circle (2.5pt);
    \end{tikzpicture}
    \caption{The $3$-uniform $5$-wheel $W_{5}^{3}$.} 
    \label{Fig:W53}
    \end{figure}
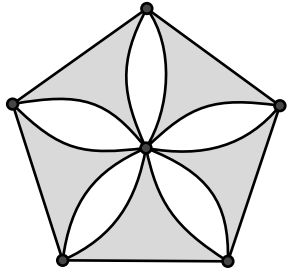
    
    Let the $r$-uniform $5$-wheel $W_{5}^{r}$ be the $r$-graph on $r+3$ vertices with edge set 
    \begin{align*}
        \left\{\{u_{1},\ldots u_{r-2},v_1,v_2\}, \{u_{1},\ldots u_{r-2},v_2,v_3\}, \ldots, \{u_{1},\cdots u_{r-2},v_5,v_1\} \right\}. 
    \end{align*}
    Given a tuple $(x_1, \ldots, x_{r-2}, y_1, \ldots, y_5) \in \mathbb{N}^{r+3}$, the blowup $W_{5}^{r}[x_1, \ldots, x_{r-2}, y_1, \ldots, y_5]$ of $W_{5}^{r}$ is obtained by replacing each $u_{i}$ with a set of size $x_i$, replacing each $v_i$ with a set of size $y_i$, and replacing each edge with the corresponding complete $r$-partite $r$-graph.  
    It is easy to see that every blowup of $W_{5}^{r}$ is $\mathrm{T}_{r}$-free and not $r$-partite. 
    %
    
    Let $n$ be an integer satisfying $n \equiv 0 \Mod{2r+1}$, let $(x_1, \ldots, x_{r-2}, y_1, \ldots, y_5) \coloneqq \left(\frac{2n}{2r+1}, \ldots, \frac{2n}{2r+1}, \frac{n}{2r+1}, \ldots, \frac{n}{2r+1}\right)$, and let $\mathcal{G} \coloneqq W_{5}^{r}[x_1, \ldots, x_{r-2}, y_1, \ldots, y_5]$. 
    Simple calculations show that $\mathcal{G}$ has exactly $n$ vertices and $\delta_{r-1}^+(\mathcal{G})=\frac{2n}{2r+1}$. This shows that the bound $\delta_{r-1}^+(\mathcal{H})>\frac{2n}{2r+1}$ in Theorem~\ref{THM:main-AES-Tr-codegree}  cannot be improved in general. 
    \item A general theorem of Hou et al.~{\cite[Theorem~1.6]{HLYZZ22}} shows that if $\mathcal{H}$ is a \textbf{saturated}\footnote{Here, `saturated' means that $\partial \mathcal{H}$ is $K_4$-free, but adding any new edge into $\mathcal{H}$ will create a copy of $K_4$ in $\partial \mathcal{H}$.} $n$-vertex $3$-graph satisfying $K_4\not\subseteq \partial \mathcal{H}$ and $\delta_{2}^{+}(\mathcal{H}) > 2n/7$, then $\mathcal{H}$ is $3$-partite. They asked whether the constraint `saturated' can be removed. Observe that the shadow of $F_5$ contains a copy of $K_4$, and hence every $3$-graph $\mathcal{H}$ satisfying $K_4 \not\subseteq \partial \mathcal{H}$ must be $\mathrm{T}_{3}$-free. Thus, Theorem~\ref{THM:main-AES-Tr-codegree} for the case $r=3$ provides a positive answer to their question. 
\end{itemize}

A immediate consequence of Theorem~\ref{THM:main-AES-Tr-codegree} is the determination of the \textbf{positive codegree Tur\'{a}n number} $\mathrm{co}^{+}\mathrm{ex}(n,\mathrm{T}_{r})$ of $\mathrm{T}_{r}$, which is defined as 
\begin{align*}
    \mathrm{co}^{+}\mathrm{ex}(n,\mathrm{T}_{r})
    \coloneqq \max\left\{\delta^{+}_{r-1}(\mathcal{H}) \colon \text{$\mathcal{H}$ is an $n$-vertex $\mathrm{T}_{r}$-free $r$-graph}\right\}.
\end{align*}

\begin{corollary}\label{CORO:codegree-Turan-Tr}
    For $r\ge 3$ and $n \ge \frac{(r-1)(2r+1)}{2}$, we have 
    \begin{align*}
        \mathrm{co}^{+}\mathrm{ex}(n,\mathrm{T}_{r}) 
        = \left\lfloor \frac{n}{r} \right\rfloor.
    \end{align*}
\end{corollary}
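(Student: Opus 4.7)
The plan is to derive the corollary by matching upper and lower bounds, with all the substantive work loaded into Theorem~\ref{THM:main-AES-Tr-codegree}.

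For the lower bound, I would first observe that $\mathrm{T}_{r}$ itself is not $r$-partite: the two edges $\{1,\ldots,r-1,r\}$ and $\{1,\ldots,r-1,r+1\}$ overlap in $r-1$ vertices, so in any proper $r$-coloring the vertices $r$ and $r+1$ must share the single remaining color, yet both appear together in the third edge $\{r,r+1,\ldots,2r-1\}$. Consequently every $r$-partite $r$-graph is automatically $\mathrm{T}_{r}$-free, and the balanced complete $r$-partite $r$-graph whose part sizes $a_{1},\ldots,a_{r}$ all lie in $\{\lfloor n/r\rfloor,\lceil n/r\rceil\}$ provides a $\mathrm{T}_{r}$-free $r$-graph on $n$ vertices. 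In this construction, for every $e\in\partial\mathcal{H}$ the neighborhood $N_{\mathcal{H}}(e)$ is precisely the unique part disjoint from $e$, so minimizing over $e$ gives $\delta_{r-1}^{+}=\lfloor n/r\rfloor$.

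For the upper bound I would argue by contradiction. Suppose $\mathcal{H}$ is an $n$-vertex $\mathrm{T}_{r}$-free $r$-graph with $\delta_{r-1}^{+}(\mathcal{H})>\lfloor n/r\rfloor$. Integrality upgrades this to $\delta_{r-1}^{+}(\mathcal{H})\ge\lfloor n/r\rfloor+1$, and a short arithmetic check, writing $n=qr+s$ with $0\le s<r$, reduces the desired inequality $\lfloor n/r\rfloor+1>2n/(2r+1)$ to $q+2r+1>2s$, which holds unconditionally since $s<r$. Since the hypothesis $n\ge (r-1)(2r+1)/2$ is inherited from the corollary, Theorem~\ref{THM:main-AES-Tr-codegree} applies and forces $\mathcal{H}$ to be $r$-partite, with some vertex classes $V_{1},\ldots,V_{r}$. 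Picking any edge $E\in\mathcal{H}$ and letting $V_{i^{\ast}}$ be a smallest class, which satisfies $|V_{i^{\ast}}|\le\lfloor n/r\rfloor$ by averaging, the set $e$ obtained by removing from $E$ its unique vertex in $V_{i^{\ast}}$ lies in $\partial\mathcal{H}$ with $N_{\mathcal{H}}(e)\subseteq V_{i^{\ast}}$. This gives $|N_{\mathcal{H}}(e)|\le\lfloor n/r\rfloor$, contradicting the assumed lower bound on $\delta_{r-1}^{+}(\mathcal{H})$.

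I do not anticipate any genuine obstacle here: Theorem~\ref{THM:main-AES-Tr-codegree} carries the entire content, and the remaining work is the one-line arithmetic check (tailored so that the integer jump from $\lfloor n/r\rfloor$ to $\lfloor n/r\rfloor+1$ is enough to overshoot $2n/(2r+1)$) together with the trivial partition argument above. In particular no separate analysis of small $n$ is required beyond the range already covered by the main theorem.
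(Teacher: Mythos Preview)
Your proposal is correct and follows exactly the route the paper has in mind: the corollary is stated there as an ``immediate consequence'' of Theorem~\ref{THM:main-AES-Tr-codegree} without a written proof, and your argument---balanced complete $r$-partite lower bound, plus the observation that $\lfloor n/r\rfloor+1>2n/(2r+1)$ so the theorem forces $r$-partiteness and hence a small part bounding $\delta_{r-1}^{+}$---is precisely what that phrase unpacks to.
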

\textbf{Remarks.} 
\begin{itemize}
    \item Corollary~\ref{CORO:codegree-Turan-Tr} extends~{\cite[Theorem~7]{HLP22}}, which establishes that $\mathrm{co}^{+}\mathrm{ex}(n,\mathrm{T}_{3}) = \lfloor n/3 \rfloor$ for $n \ge 6$. 
    \item Note that the extremal construction for the positive codegree Tur\'{a}n problem of $\mathrm{T}_{r}$ is the balanced $r$-partite $r$-graph on $n$-vertices for every $r\ge 2$. This construction coincides with the extremal construction for the Tur\'{a}n problem when $r \in \{2,3,4\}$. However, for $r \in \{5,6\}$, Corollary~\ref{CORO:codegree-Turan-Tr}, together with a classical theorem by Frankl--F\"{u}redi~\cite{FF89}, shows that these two problems have quite different extremal constructions. For $r \ge 7$, the  Tur\'{a}n problem for $\mathrm{T}_{r}$ is still wide open. 
\end{itemize}

\section{Proof of Theorem~\ref{THM:main-AES-Tr-codegree}}\label{SEC:proof-main}
We prove Theorem~\ref{THM:main-AES-Tr-codegree} in this section. Let us start with some necessary definitions and preliminary results.

Let $\mathcal{H}$ be an $r$-graph.
For a vertex $v \in V(\mathcal{H})$, the \textbf{link} of $v$ in $\mathcal{H}$ is defined as 
\begin{align*}
    L_{\mathcal{H}}(v)
    \coloneqq \left\{e\in \partial\mathcal{H} \colon e\cup \{v\} \in \mathcal{H}\right\}. 
\end{align*}
The \textbf{neighborhood} of $v$ in $\mathcal{H}$ is defined as 
\begin{align*}
    N_{\mathcal{H}}(v)
    \coloneqq \left\{u \in V(\mathcal{H})\setminus \{v\} \colon \text{there exists $e\in \mathcal{H}$ such that $\{u,v\} \subseteq e$} \right\}.
\end{align*}
Recall that for every $e\in \partial\mathcal{H}$, the \textbf{neighborhood} $N_{\mathcal{H}}(e)$ of $e$ in $\mathcal{H}$ is 
\begin{align*}
    N_{\mathcal{H}}(e)
    \coloneqq \left\{v\in V(\mathcal{H}) \colon e\cup \{v\} \in \mathcal{H}\right\}. 
\end{align*}
It is clear from the definition that for every $v \in V(\mathcal{H})$,  
\begin{align}\label{equ:coneighbor-neighbor}
    \bigcup_{e\in \partial\mathcal{H} \colon v\in e} N_{\mathcal{H}}(e) 
    = N_{\mathcal{H}}(v). 
\end{align}
We say a set $I \subseteq V(\mathcal{H})$ of vertices is \textbf{independent} in $\mathcal{H}$ if every edge in $\mathcal{H}$ contains at most one vertex in $I$. 

For every $r\ge 2$. Let $\Sigma_{r}$ denote the family of $r$-graphs consisting of three edges $A,B,C$ such that $|A\cap B| = r-1$ and the symmetric difference $A \triangle B$ is contained in $C$. Note that $\mathrm{T}_{r} \in \Sigma_{r}$. 

\begin{proposition}\label{PROP:Tr-free-implies-Sigma-r-free}
    Let $r\ge 3$ be an integer. Every $\mathrm{T}_{r}$-free $r$-graph $\mathcal{H}$ with $\delta_{r-1}^+(\mathcal{H}) \ge  r$ is also $\Sigma_{r}$-free. 
\end{proposition}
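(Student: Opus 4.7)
The plan is to argue by contradiction. Suppose that $\mathcal{H}$ is $\mathrm{T}_{r}$-free with $\delta_{r-1}^{+}(\mathcal{H}) \ge r$, yet contains three edges $A, B, C \in \mathcal{H}$ witnessing a member of $\Sigma_{r}$; that is, $|A \cap B| = r-1$ and $A \triangle B \subseteq C$. Write $A \cap B = \{v_1, \ldots, v_{r-1}\}$, $A \setminus B = \{a\}$, and $B \setminus A = \{b\}$, so that $\{a, b\} \subseteq C$. If $C \cap \{v_1, \ldots, v_{r-1}\} = \emptyset$, then $\{A, B, C\}$ is already a copy of $\mathrm{T}_{r}$, contradicting the $\mathrm{T}_{r}$-freeness of $\mathcal{H}$. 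So I may assume $t \coloneqq |C \cap \{v_1, \ldots, v_{r-1}\}| \ge 1$, and the goal becomes to produce an alternative third edge $C' \in \mathcal{H}$ with $\{a,b\} \subseteq C'$ and $C' \cap \{v_1, \ldots, v_{r-1}\} = \emptyset$.

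The main step is an iterative ``repair'' that constructs a sequence $C_0 \coloneqq C, C_1, \ldots, C_t$ of edges of $\mathcal{H}$, each containing $\{a, b\}$, with $|C_i \cap \{v_1, \ldots, v_{r-1}\}|$ strictly decreasing at every step. Given such a $C_i$ together with a bad vertex $c \in C_i \cap \{v_1, \ldots, v_{r-1}\}$, set $e'_i \coloneqq C_i \setminus \{c\}$. Since $e'_i \subseteq C_i \in \mathcal{H}$, we have $e'_i \in \partial \mathcal{H}$, and the positive codegree hypothesis gives $|N_{\mathcal{H}}(e'_i)| \ge r$. As $|\{v_1, \ldots, v_{r-1}\}| = r-1$, a trivial pigeonhole yields some $w \in N_{\mathcal{H}}(e'_i) \setminus \{v_1, \ldots, v_{r-1}\}$; set $C_{i+1} \coloneqq e'_i \cup \{w\}$. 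Then $C_{i+1}$ is an edge of $\mathcal{H}$ still containing $\{a, b\}$ (because $\{a,b\} \subseteq e'_i$), and $|C_{i+1} \cap \{v_1, \ldots, v_{r-1}\}|$ has dropped by exactly one.

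After $t$ iterations, $C_t \in \mathcal{H}$ is disjoint from $\{v_1, \ldots, v_{r-1}\}$ and contains $\{a, b\}$, so its remaining $r - 2$ vertices automatically lie outside $\{v_1, \ldots, v_{r-1}, a, b\}$, and $\{A, B, C_t\}$ realizes a copy of $\mathrm{T}_{r}$ inside $\mathcal{H}$, the desired contradiction. The argument amounts to iterating a single pigeonhole step, so there is no serious obstacle; the only delicate point is that the codegree threshold $r$ strictly exceeds $|A \cap B| = r - 1$, which is exactly what makes the pigeonhole step go through and hence what forces the constant $r$ in the hypothesis.
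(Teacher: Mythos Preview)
Your proof is correct and follows essentially the same approach as the paper's: both proofs use the positive codegree bound $\delta_{r-1}^+(\mathcal{H}) \ge r > |A \cap B|$ to replace a vertex of $C$ lying in $A \cap B$ by one outside, thereby decreasing $|C \cap (A \cap B)|$ until it reaches zero and a genuine copy of $\mathrm{T}_r$ appears. The only cosmetic difference is that the paper phrases this via a minimality argument (choose $C$ minimizing $|C \cap (A \cap B)|$ and derive a contradiction from a single replacement step), whereas you spell out the iteration explicitly.
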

\begin{proof}[Proof of Proposition~\ref{PROP:Tr-free-implies-Sigma-r-free}]
    Let $\mathcal{H}$ be a $\mathrm{T}_{r}$-free $r$-graph satisfying $\delta_{r-1}^+(\mathcal{H}) \ge  r$. Suppose to the contrary that there exist three edges $A,B,C \in \mathcal{H}$ such that $|A\cap B| = r-1$ and the symmetric difference $A \triangle B$ is contained in $C$. 
    Assume that $A = \{u_1, \ldots, u_{r-1}, u_{r}\}$ and $B = \{u_1, \ldots, u_{r-1}, u_{r+1}\}$. We can choose such an edge $C \in \mathcal{H}$ so that $|C \cap \{u_1, \ldots, u_{r-1}\}|$ is minimized. Note that we obtain a contradiction if $|C \cap \{u_1, \ldots, u_{r-1}\}| = 0$, since
    \begin{align*}
        \{\{u_1, \ldots, u_{r-1},u_{r}\}, \{u_1, \ldots, u_{r-1},u_{r+1}\}, C\}
    \end{align*}
    would form a copy of $\mathrm{T}_{r}$ in $\mathcal{H}$. So we may assume that $|C \cap \{u_1, \ldots, u_{r-1}\}| \ge 1$. 

    By symmetry, we may assume that $C \cap \{u_1, \ldots, u_{r-1}\} = \{u_1, \ldots, u_{i_{\ast}}\}$ for some $i_{\ast} \in [r-2]$. Let $e \coloneqq C\setminus \{u_1\}$. Since $e\in \partial\mathcal{H}$ and $\delta_{r-1}^+(\mathcal{H}) \ge  r$, there exists a vertex $w \in V(\mathcal{H}) \setminus \{u_1, \ldots, u_{r-1}\}$ such that $C' \coloneqq e \cup \{w\} \in \mathcal{H}$. However, since $\{u_{r}, u_{r+1}\} \subseteq C'$ and $|C' \cap \{u_1, \ldots, u_{r-1}\}| < |C \cap \{u_1, \ldots, u_{r-1}\}|$, this contradicts the choice of $C$. 
\end{proof}

\begin{lemma}\label{LEMMA:F5-3-indep-sets}
    Let $\mathcal{H}$ be a $\mathrm{T}_{r}$-free $r$-graph and $e = \{u_1,\ldots, u_r\} \in \mathcal{H}$ be an edge.
    Suppose that $\delta_{r-1}^+(\mathcal{H}) \ge  r$. Then 
    \begin{align}\label{equ:Nu-Nvw-empty}
         N_{\mathcal{H}}(e\setminus\{u_i\}) \cap N_{\mathcal{H}}(u_i)
        = \emptyset
        \quad\text{for every}\quad i\in [r]. 
    \end{align}
    Moreover, sets in $\left\{N_{\mathcal{H}}(e\setminus \{u_i\})\colon i\in [r] \right\}$ are pairwise disjoint and independent. 
\end{lemma}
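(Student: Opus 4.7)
Every subclaim of the lemma would follow from the same $\Sigma_r$-avoidance pattern, which becomes available through Proposition~\ref{PROP:Tr-free-implies-Sigma-r-free} since $\delta_{r-1}^+(\mathcal{H}) \ge r$. My uniform strategy would be: assume a violation of the subclaim; construct two edges $A, B \in \mathcal{H}$ with $|A\cap B| = r-1$ and $A\triangle B = \{x,y\}$; and exhibit a third edge $C \in \mathcal{H}$ containing $\{x,y\}$. After verifying pairwise distinctness of $A, B, C$, the triple $\{A, B, C\}$ forms a $\Sigma_r$, contradicting Proposition~\ref{PROP:Tr-free-implies-Sigma-r-free}.

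For \eqref{equ:Nu-Nvw-empty}, I would take $v \in N_{\mathcal{H}}(e\setminus\{u_i\}) \cap N_{\mathcal{H}}(u_i)$ and note that the two neighborhood conditions together force $v \notin e$ (the vertex neighborhood excludes $u_i$ itself, and the set neighborhood excludes $e\setminus\{u_i\}$); then $A \coloneqq (e\setminus\{u_i\})\cup\{v\}$, $B \coloneqq e$, and any edge $C \ni \{u_i, v\}$ supplied by $v \in N_{\mathcal{H}}(u_i)$ fit the pattern. For pairwise disjointness, a putative common vertex $v \in N_{\mathcal{H}}(e\setminus\{u_i\}) \cap N_{\mathcal{H}}(e\setminus\{u_j\})$ with $i \ne j$ must again lie outside $e$; taking $A \coloneqq (e\setminus\{u_i\})\cup\{v\}$, $B \coloneqq (e\setminus\{u_j\})\cup\{v\}$, and $C \coloneqq e$ gives $A\triangle B = \{u_i, u_j\} \subseteq e = C$.

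For independence of a single $N_{\mathcal{H}}(e\setminus\{u_i\})$, suppose distinct vertices $v, w$ in this set share an edge $C \in \mathcal{H}$. Membership only forbids each of $v, w$ from lying in $e\setminus\{u_i\}$, so at most one of them can equal $u_i$; in either case the pair $A \coloneqq (e\setminus\{u_i\})\cup\{v\}$ and $B \coloneqq (e\setminus\{u_i\})\cup\{w\}$ (where, when say $v = u_i$, $A$ collapses to $e$) is a pair of distinct edges with $|A\cap B| = r-1$ and $A\triangle B = \{v, w\} \subseteq C$. The only delicate bookkeeping across the three arguments is confirming pairwise distinctness of $A, B, C$ in each configuration; once one tracks which of $u_i, u_j, v, w$ lies inside versus outside each edge, this is straightforward, and Proposition~\ref{PROP:Tr-free-implies-Sigma-r-free} finishes each case.
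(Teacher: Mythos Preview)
Your proposal is correct and follows essentially the same approach as the paper: invoke Proposition~\ref{PROP:Tr-free-implies-Sigma-r-free} to upgrade $\mathrm{T}_r$-freeness to $\Sigma_r$-freeness, then for each subclaim exhibit a $\Sigma_r$-configuration. The only noteworthy difference is that the paper does not prove pairwise disjointness by a separate $\Sigma_r$ argument; instead it observes that disjointness is an immediate corollary of~\eqref{equ:Nu-Nvw-empty}, since $v \in N_{\mathcal{H}}(e\setminus\{u_j\})$ with $j\neq i$ forces (via the edge $(e\setminus\{u_j\})\cup\{v\}\ni u_i$) that $v\in N_{\mathcal{H}}(u_i)$, whence $v\notin N_{\mathcal{H}}(e\setminus\{u_i\})$. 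Your direct route works just as well; the paper's shortcut simply avoids repeating the $\Sigma_r$ template a third time.
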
 
\begin{proof}[Proof of Lemma~\ref{LEMMA:F5-3-indep-sets}]
    By symmetry, it suffices to prove~\eqref{equ:Nu-Nvw-empty} for the case where $i=r$. Suppose to the contrary that there exists a vertex $u_{r+1} \in N_{\mathcal{H}}(e\setminus\{u_{r}\}) \cap N_{\mathcal{H}}(u_{r})$. By the definition of $N_{\mathcal{H}}(u_{r})$, there exists an edge $e\in \mathcal{H}$ containing $\{u_{r}, u_{r+1}\}$. Notice that $\{\{u_1, \ldots, u_{r-1},u_{r}\}, \{u_1, \ldots, u_{r-1},u_{r+1}\},e\}$ is a member in $\Sigma_{r}$, which, by Proposition~\ref{PROP:Tr-free-implies-Sigma-r-free}, is a contradiction. 
    
    It follows from~\eqref{equ:Nu-Nvw-empty} that sets in $\left\{N_{\mathcal{H}}(e\setminus \{u_i\})\colon i\in [r] \right\}$ are pairwise disjoint. So it suffices to show that they are independent in $\mathcal{H}$. Suppose to the contrary that this is not true. By symmetry, we may assume that there exist two distinct vertices $v_{r}, v_{r+1} \in N_{\mathcal{H}}(e\setminus \{u_{r}\}) \cap e$ for some edge $e\in \mathcal{H}$. Note that $\{\{u_1, \ldots, u_{r-1},v_{r}\}, \{u_1, \ldots, u_{r-1},v_{r+1}\},e\}$ is a member in $\Sigma_{r}$, which, by Proposition~\ref{PROP:Tr-free-implies-Sigma-r-free}, is a contradiction. 
\end{proof}

We are now ready to prove Theorem~\ref{THM:main-AES-Tr-codegree}.

\begin{proof}[Proof of Theorem~\ref{THM:main-AES-Tr-codegree}]
     Let $n \ge (r-1)(2r+1)/2$ be an integer. Let $\mathcal{H}$ be an $n$-vertex $\mathrm{T}_{r}$-free $r$-graph satisfying $\delta_{r-1}^+(\mathcal{H})> \frac{2n}{2r+1}$. Note that $\delta_{r-1}^+(\mathcal{H}) \ge r$. Let $V \coloneqq V(\mathcal{H})$. 
     \begin{claim}\label{CLAIM:three-indep-sets}
         There exist $r$ pairwise disjoint independent sets $V_1, \ldots, V_r \subseteq V$ such that 
         \begin{align}\label{equ:min-V1V2V3}
            \min\left\{|V_1|, \ldots, |V_r|\right\} 
            > \frac{2n}{2r+1}. 
        \end{align}
     \end{claim}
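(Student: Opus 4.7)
The plan is essentially to read the claim off directly from Lemma~\ref{LEMMA:F5-3-indep-sets} together with the codegree hypothesis. Since $\delta_{r-1}^{+}(\mathcal{H}) > 2n/(2r+1) \ge 1$, the hypergraph $\mathcal{H}$ is nonempty, so I can fix any edge $e = \{u_1, \ldots, u_r\} \in \mathcal{H}$. For each $i \in [r]$, I will define
\[
    V_i \coloneqq N_{\mathcal{H}}(e \setminus \{u_i\}).
\]

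Each $V_i$ is the co-neighborhood of an $(r-1)$-set lying in $\partial\mathcal{H}$ (because $e \in \mathcal{H}$), so by the hypothesis on $\delta_{r-1}^{+}(\mathcal{H})$,
\[
    |V_i| \ge \delta_{r-1}^{+}(\mathcal{H}) > \frac{2n}{2r+1},
\]
which gives the required lower bound~\eqref{equ:min-V1V2V3}. The pairwise disjointness and the independence of $V_1, \ldots, V_r$ are exactly the conclusions of Lemma~\ref{LEMMA:F5-3-indep-sets}, which is applicable here because $\mathcal{H}$ is $\mathrm{T}_r$-free and $\delta_{r-1}^{+}(\mathcal{H}) \ge r$.

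Since every ingredient is already established, there is no real obstacle: the claim is an immediate corollary of Lemma~\ref{LEMMA:F5-3-indep-sets} applied to any edge, combined with the codegree lower bound. The only thing worth stating explicitly is that $\mathcal{H}$ contains at least one edge (used to invoke the lemma), which follows from the strict positive codegree condition.
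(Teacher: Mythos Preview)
Your proof is correct and essentially identical to the paper's own argument: the paper also fixes an edge $e = \{u_1,\ldots,u_r\} \in \mathcal{H}$, sets $V_i \coloneqq N_{\mathcal{H}}(e\setminus\{u_i\})$, invokes Lemma~\ref{LEMMA:F5-3-indep-sets} for disjointness and independence, and uses $\delta_{r-1}^{+}(\mathcal{H}) > \frac{2n}{2r+1}$ for the size bound. Your added remarks that $\mathcal{H}$ is nonempty and that $\delta_{r-1}^{+}(\mathcal{H}) \ge r$ make the applicability of the lemma explicit, which the paper takes for granted.
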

     \begin{proof}[Proof of Claim~\ref{CLAIM:three-indep-sets}]
         Fix an edge $e = \{u_1, \ldots, u_{r}\} \in \mathcal{H}$. Let $V_i \coloneqq N_{\mathcal{H}}(e\setminus \{u_i\})$ for $i \in [r]$.
        It follows from Lemma~\ref{LEMMA:F5-3-indep-sets} that $V_1, \ldots, V_r$ are pairwise disjoint and independent in $\mathcal{H}$. Additionally, if follows from $\delta_{r-1}^+(\mathcal{H})> \frac{2n}{2r+1}$ that $\min\left\{|V_1|, \ldots, |V_r|\right\} > \frac{2n}{2r+1}$.
     \end{proof}
     Let $V_1, \ldots, V_r \subseteq V$ be $r$ pairwise disjoint independent sets satisfying~\eqref{equ:min-V1V2V3} and assume that $V_1, \ldots, V_r$ are maximal subject to these constraints. Let $Z \coloneqq V\setminus (V_1 \cup \cdots \cup V_r)$, noting that 
     \begin{align}\label{equ:Z-upper-bound}
         |Z| 
         < n - r\cdot \frac{2n}{2r+1}
         \le \frac{n}{2r+1}.
     \end{align}
     Note that we are done if $Z = \emptyset$. So we may assume that there exists a vertex $v \in Z$. Let $R \coloneqq N_{\mathcal{H}}(v)$. 
     For every $i \in [r]$, let 
     \begin{align*}
         L_{i}
         \coloneqq \left\{e\in L_{\mathcal{H}}(v) \colon \text{$|e\cap V_j| = 1$ for every $j \in [r]\setminus\{i\}$}\right\}
         \quad\text{and}\quad 
         N_i
         \coloneqq R \cap V_i.
     \end{align*}
     It follows from the maximality of $V_1, \ldots, V_r$ that $N_i \neq \emptyset$ for every $i \in [r]$. 
     \begin{claim}\label{CLAIM:R-lower-bound}
         We have $|R| > \frac{2(r-1) n}{2r+1}$.
     \end{claim}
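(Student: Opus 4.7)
The plan is to produce $r-1$ pairwise disjoint subsets of $R = N_{\mathcal{H}}(v)$, each of size greater than $2n/(2r+1)$, by applying Lemma~\ref{LEMMA:F5-3-indep-sets} to a suitable edge through $v$.

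First, I would note that $v$ lies in at least one edge of $\mathcal{H}$: as already pointed out just before the claim, maximality of $V_1,\ldots,V_r$ forces each $N_i = R \cap V_i$ to be non-empty, because otherwise $V_i \cup \{v\}$ would still be independent and of size exceeding $2n/(2r+1)$, contradicting maximality. I would then pick any $u_1 \in N_1$ and fix an edge $e \in \mathcal{H}$ containing $\{v, u_1\}$, writing $e = \{v, u_1, w_2, \ldots, w_{r-1}\}$.

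Next, I would apply Lemma~\ref{LEMMA:F5-3-indep-sets} to $e$: the $r$ sets $N_{\mathcal{H}}(e\setminus\{x\})$, indexed by $x\in e$, are pairwise disjoint. Among them I would focus on the $r-1$ choices with $x \ne v$. For each such $x$, the $(r-1)$-set $e\setminus\{x\}$ still contains $v$, so every $w \in N_{\mathcal{H}}(e\setminus\{x\})$ is automatically distinct from $v$ (since $v \in e\setminus\{x\}$ already) and shares the edge $(e\setminus\{x\})\cup\{w\}$ with $v$; hence $N_{\mathcal{H}}(e\setminus\{x\}) \subseteq R$. Combining this with the positive codegree hypothesis $\delta_{r-1}^+(\mathcal{H}) > 2n/(2r+1)$ and the pairwise disjointness, summing over the $r-1$ choices yields $|R| > (r-1)\cdot 2n/(2r+1) = 2(r-1)n/(2r+1)$.

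There is no real obstacle once Lemma~\ref{LEMMA:F5-3-indep-sets} is in hand; the argument is essentially bookkeeping. The only step requiring a moment of care is recognizing that exactly $r-1$ of the $r$ shadow-neighborhoods attached to $e$ — namely those that retain $v$ — automatically sit inside $R$, so no fresh appeal to $\mathrm{T}_r$-freeness is needed at this stage.
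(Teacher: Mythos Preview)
Your argument is correct and essentially identical to the paper's: both fix an edge of $\mathcal{H}$ through $v$, apply Lemma~\ref{LEMMA:F5-3-indep-sets} to obtain pairwise disjoint neighborhoods of the $r-1$ shadow sets that still contain $v$, observe via~\eqref{equ:coneighbor-neighbor} that these sit inside $R$, and sum the positive-codegree lower bounds. The only cosmetic difference is that you justify the existence of an edge through $v$ via $N_1 \neq \emptyset$, whereas the paper simply picks an element of $L_{\mathcal{H}}(v)$ directly.
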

     \begin{proof}[Proof of Claim~\ref{CLAIM:R-lower-bound}]
        Fix an edge $\{v_1, \ldots, v_{r-1}\} \in L_{\mathcal{H}}(v)$.
        For every $i \in [r-1]$, let $e_i \coloneqq \{v_1, \ldots, v_{r-1},v\} \setminus \{v_i\}$ and $U_i \coloneqq N_{\mathcal{H}}(e_i)$. It follows from Property~\eqref{equ:coneighbor-neighbor} that $U_i \subseteq R$ for every $i \in [r-1]$. Combining this with Lemma~\ref{LEMMA:F5-3-indep-sets} and the assumption $\delta_{r-1}^+(\mathcal{H})> \frac{2n}{2r+1}$, we obtain 
        \begin{align*}
            |R|
            \ge |U_1| + \cdots + |U_{r-1}|
            \ge (r-1) \cdot \delta_{r-1}^+(\mathcal{H})
            > \frac{2(r-1) n}{2r+1},  
        \end{align*}
        proving Claim~\ref{CLAIM:R-lower-bound}. 
     \end{proof}
    \begin{claim}\label{CLAIM:Lij-nonempty}
        At most one set in $\{L_{1}, \ldots, L_{r}\}$ is nonempty.
    \end{claim}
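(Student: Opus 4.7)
The plan is to argue by contradiction. I would assume that $L_i$ and $L_j$ are both nonempty for some $i \neq j$ and then derive a contradiction; by relabeling the $V_k$'s it suffices to take $i = r$ and $j = r-1$. Fix representatives $e = \{a_1, \ldots, a_{r-1}\} \in L_r$ with $a_k \in V_k$ and $f = \{b_1, \ldots, b_{r-2}, b_r\} \in L_{r-1}$ with $b_k \in V_k$ for $k \in [r-2]$ and $b_r \in V_r$, so that both $e \cup \{v\}$ and $f \cup \{v\}$ are edges of $\mathcal{H}$. Introduce the two neighborhoods $W \coloneqq N_{\mathcal{H}}(e)$ and $X \coloneqq N_{\mathcal{H}}(f)$, for which the positive codegree hypothesis gives $|W|, |X| > \frac{2n}{2r+1}$.

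The first step uses only independence of the $V_k$: for any $w \in W$ the edge $e \cup \{w\}$ contains $a_k \in V_k$ for each $k \in [r-1]$, so $w$ lies in none of $V_1, \ldots, V_{r-1}$, whence $W \subseteq V_r \cup Z$; the symmetric reasoning gives $X \subseteq V_{r-1} \cup Z$. In particular $W \cap X \subseteq (V_r \cup Z) \cap (V_{r-1} \cup Z) = Z$, and therefore $|W \cap X| \leq |Z| < \frac{n}{2r+1}$ by~\eqref{equ:Z-upper-bound}.

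The main step---and the only place where I would invoke genuine $\mathrm{T}_r$-freeness rather than a counting estimate---is to show that $W \cup X$ is disjoint from $R$. For any $w \in W \setminus \{v\}$, the two distinct edges $e \cup \{v\}$ and $e \cup \{w\}$ share exactly the $(r-1)$-set $e$ and have symmetric difference $\{v, w\}$; if some edge $C \in \mathcal{H}$ contained $\{v, w\}$, then $\{e \cup \{v\}, e \cup \{w\}, C\}$ would form a copy of $\Sigma_r$, contradicting Proposition~\ref{PROP:Tr-free-implies-Sigma-r-free}. Hence no edge contains $\{v, w\}$, that is, $w \notin N_{\mathcal{H}}(v) = R$. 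Combined with the trivial fact $v \notin R$, this yields $W \cap R = \emptyset$, and the identical argument applied to $f$ gives $X \cap R = \emptyset$.

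To close, Claim~\ref{CLAIM:R-lower-bound} gives $|V \setminus R| < n - \frac{2(r-1)n}{2r+1} = \frac{3n}{2r+1}$, whereas
\begin{align*}
  |W \cup X|
  \;=\; |W| + |X| - |W \cap X|
  \;>\; \frac{4n}{2r+1} - \frac{n}{2r+1}
  \;=\; \frac{3n}{2r+1}.
\end{align*}
Since $W \cup X \subseteq V \setminus R$, this is the desired contradiction. No case analysis on whether $\{a_1, \ldots, a_{r-2}\}$ and $\{b_1, \ldots, b_{r-2}\}$ coincide is needed; the argument is uniform in $e$ and $f$.
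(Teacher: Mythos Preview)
Your proof is correct and follows essentially the same route as the paper: fix representatives in two nonempty $L_i$'s, use independence to confine their neighborhoods to $V_i\cup Z$ and $V_j\cup Z$, use~\eqref{equ:Nu-Nvw-empty} (which you re-derive via Proposition~\ref{PROP:Tr-free-implies-Sigma-r-free}) to get disjointness from $R$, and then count. The only cosmetic difference is that the paper phrases the final contradiction as $|R\cup W\cup X|>n$, whereas you equivalently compare $|W\cup X|$ with $|V\setminus R|$.
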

    \begin{proof}[Proof of Claim~\ref{CLAIM:Lij-nonempty}]
        Suppose to the contrary that this is not true. By symmetry, we may assume that $L_{1} \neq \emptyset$ and $L_{r} \neq \emptyset$. 
        Fix $\{v_1, \ldots, v_{r-1}\} \in L_{r}$ and $\{w_2, \ldots, w_{r}\} \in L_{1}$, assuming that $(v_1, \ldots, v_{r-1}) \in V_1 \times \cdots \times V_{r-1}$ and $(w_2, \ldots, w_{r}) \in V_2 \times \cdots \times V_r$ (it may hold that $v_i  = w_i$ for $i \in [2,r-1]$). 
        Since $V_1, \ldots, V_r$ are independent in $\mathcal{H}$, we have 
        \begin{align}\label{equ:Nu1u2-Nv2v3}
            N_{\mathcal{H}}(v_1 \cdots v_{r-1}) \subseteq V_r \cup Z
            \quad\text{and}\quad 
            N_{\mathcal{H}}(w_2 \cdots w_r) \subseteq V_1 \cup Z. 
        \end{align}
        Since $\{v_1, \ldots, v_{r-1}\}, \{w_2, \ldots, w_r\} \in L_{\mathcal{H}}(v)$, it follows from~\eqref{equ:Nu-Nvw-empty} that 
        \begin{align*}
            R \cap N_{\mathcal{H}}(v_1 \cdots v_{r-1})
            = R \cap N_{\mathcal{H}}(w_2 \cdots w_r)
            = \emptyset. 
        \end{align*}
        Combining this with~\eqref{equ:Nu1u2-Nv2v3}, Claim~\ref{CLAIM:R-lower-bound}, and the assumption $\delta_{r-1}^+(\mathcal{H})> \frac{2n}{2r+1}$, we obtain 
        \begin{align*}
            &|R \cup  N_{\mathcal{H}}(v_1 \cdots v_{r-1}) \cup N_{\mathcal{H}}(w_1 \cdots w_{r})| \\
            & \quad = |R| + |N_{\mathcal{H}}(v_1 \cdots v_{r-1})| + |N_{\mathcal{H}}(w_1 \cdots w_{r})| - |N_{\mathcal{H}}(v_1 \cdots v_{r-1}) \cap N_{\mathcal{H}}(w_1 \cdots w_{r})|\\
            & \quad > \frac{2(r-1)n}{2r+1} + \frac{2n}{2r+1} + \frac{2n}{2r+1} - |Z|
            > n, 
        \end{align*}
        a contradiction. Here, we used~\eqref{equ:Z-upper-bound} in the last inequality. 
    \end{proof}
    By symmetry, we may assume that $L_{i} = \emptyset$ for every $i \in [2,r]$. Fix a vertex $v_1 \in N_1$ (recall that $N_1 \neq \emptyset$). 
    \begin{claim}\label{CLAIM:find-e}
        There exists an edge $e\in \mathcal{H}$ containing $\{v,v_1\}$ such that 
        \begin{align*}
            |e \cap (V_{2} \cup \cdots \cup V_{r})| \ge  r-2. 
        \end{align*}
    \end{claim}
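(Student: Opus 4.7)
The plan is to produce the required edge by a minimality argument driven by the positive codegree hypothesis. Since $v_1\in N_1\subseteq N_{\mathcal{H}}(v)$, at least one edge of $\mathcal{H}$ contains $\{v,v_1\}$; for any such edge, the independence of $V_1$ forces $e\cap V_1=\{v_1\}$, so the condition $|e\cap(V_2\cup\cdots\cup V_r)|\ge r-2$ is equivalent to $e\cap Z=\{v\}$. Thus it suffices to find an edge containing $\{v,v_1\}$ whose only vertex in $Z$ is $v$.

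I would choose such an edge $e$ with $|e\cap Z|$ minimum and argue by contradiction, assuming some $u\in (e\cap Z)\setminus\{v\}$ exists. Set $e'\coloneqq e\setminus\{u\}\in\partial\mathcal{H}$; since $v_1\in e'$ and $V_1$ is independent, $N_{\mathcal{H}}(e')\cap V_1=\emptyset$, and therefore $N_{\mathcal{H}}(e')\subseteq(V_2\cup\cdots\cup V_r)\cup Z$. Combining $|N_{\mathcal{H}}(e')|>\frac{2n}{2r+1}$ with the bound $|Z|<\frac{n}{2r+1}$ from~\eqref{equ:Z-upper-bound} gives
\[
|N_{\mathcal{H}}(e')\cap(V_2\cup\cdots\cup V_r)|>\frac{2n}{2r+1}-\frac{n}{2r+1}>0,
\]
so some $w\in V_2\cup\cdots\cup V_r$ extends $e'$ to an edge $e'\cup\{w\}\in\mathcal{H}$. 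This new edge still contains $\{v,v_1\}$ yet has strictly fewer vertices in $Z$, contradicting the minimality of $e$. (Equivalently, one can start from an arbitrary edge containing $\{v,v_1\}$ and iterate the swap at most $r-2$ times, but phrasing it via minimality is cleaner.)

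The main point to keep an eye on is making sure the swap preserves both $v$ and $v_1$: this is automatic because $u\neq v$ by choice, and $u\in Z$ is disjoint from $V_1\ni v_1$. I do not expect the hypothesis $L_i=\emptyset$ for $i\ge 2$ from Claim~\ref{CLAIM:Lij-nonempty} to play any role in this particular step; presumably it is invoked afterwards, once the structure of the edge produced here is exploited to pin down how $v$ attaches to $V_1\cup\cdots\cup V_r$.
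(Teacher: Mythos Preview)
Your proof is correct and follows essentially the same swap argument as the paper: the paper picks $e$ maximizing $|e\cap(V_2\cup\cdots\cup V_r)|$ while you minimize $|e\cap Z|$, but as you observe these are equivalent, and the replacement step (delete a vertex of $e$ lying in $Z\setminus\{v\}$, then use $\delta_{r-1}^{+}(\mathcal{H})>|Z|$ to add a vertex from $V_2\cup\cdots\cup V_r$) is identical. Your remark that the hypothesis $L_i=\emptyset$ for $i\ge 2$ is not needed here is also correct; the paper invokes it only after this claim.
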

    \begin{proof}[Proof of Claim~\ref{CLAIM:find-e}]
        Since $v_{1}\in R = N_{\mathcal{H}}(v)$, there exists an edge in $\mathcal{H}$ containing $\{v,v_1\}$. 
        Choose an edge $e\in \mathcal{H}$ containing $\{v,v_1\}$ such that $|e \cap (V_{2} \cup \cdots \cup V_{r})|$ is maximized. Suppose to the contrary that $|e \cap (V_{2} \cup \cdots \cup V_{r})| \le r-3$. 

        Since $V_1, \ldots, V_{r}$ are independent in $\mathcal{H}$, we have $|e\cap V_i| \le 1$ for every $i \in [r]$. In particular, $e\cap V_1 = \{v_1\}$. By symmetry, we may assume that $e\cap V_j \neq \emptyset$ for $j \in [i_{\ast}]$ with some $i_{\ast} \le r-2$. Let $v_j$ denote the vertex in $e\cap V_j$ for $j \in [i_{\ast}]$. 
        Assume that $(e \setminus\{v\}) \cap Z = \{v_{i_{\ast}+1}, \ldots, v_{r}\}$. Let $e' \coloneqq \{v, v_1, \ldots, v_{r-1}\}$. It follows from $\delta_{r-1}^+(\mathcal{H})> \frac{2n}{2r+1}$ and~\eqref{equ:Z-upper-bound} that 
        \begin{align*}
            |N_{\mathcal{H}}(e')|
            > \frac{2n}{2r+1}
            > |Z|,
        \end{align*}
        which implies that there exists $w \in N_{\mathcal{H}}(e') \setminus Z$ such that $\hat{e} \coloneqq e' \cup \{w\} \in \mathcal{H}$. Since $V_1, \ldots, V_{r}$ are independent in $\mathcal{H}$ and $\hat{e} \cap V_j \neq \emptyset$ for every $j \in [i_{\ast}]$, we have $w\in V_{i_{\ast}+1} \cup \cdots \cup V_{r}$. However, since $\hat{e}$ contains $\{v, v_1\}$ and satisfies 
        \begin{align*}
            |\hat{e} \cap (V_{2} \cup \cdots \cup V_{r})|
            = |e\cap (V_{2} \cup \cdots \cup V_{r})| + 1,
        \end{align*}
        this is a contradiction to the maximazlity of $e$. 
    \end{proof}
    It follows from Claim~\ref{CLAIM:find-e} that $L_{j} \neq \emptyset$ for some $j \in [2,r]$, contradicting the assumption that $L_{i} = \emptyset$ for every $i \in [2,r]$. This completes the proof of Theorem~\ref{THM:main-AES-Tr-codegree}. 
\end{proof}
\section{Concluding remark}\label{SEC:remark}
Given integers $r > i \ge 1$, the \textbf{$i$-th shadow} $\partial_{i}\mathcal{H}$ of an $r$-graph $\mathcal{H}$ is defined inductively by letting $\partial_{i}\mathcal{H} \coloneqq \partial\partial_{i-1}\mathcal{H}$ with $\partial_{1}\mathcal{H} \coloneqq \partial\mathcal{H}$.
For every $i$-set $S\subseteq  V(\mathcal{H})$, the \textbf{link} of $S$ in $\mathcal{H}$ is 
\begin{align*}
    L_{\mathcal{H}}(S)
    \coloneqq \left\{e\in \partial_{i}\mathcal{H} \colon S\cup e \in \mathcal{H}\right\}, 
\end{align*}
and the \textbf{degree} of $S$ is $d_{\mathcal{H}}(S) \coloneqq |L_{\mathcal{H}}(S)|$. 
Similar to the definition of $\delta^{+}_{r-1}(\mathcal{H})$, the \textbf{positive minimum $i$-degree of $\mathcal{H}$} is defined as 
\begin{align*}
    \delta^{+}_{i}(\mathcal{H})
    \coloneqq \min\left\{d_{\mathcal{H}}(e) \colon e\in \partial_{r-i}\mathcal{H}\right\}. 
\end{align*}
It is a natural question to explore whether a positive minimum $i$-degree analogue of Theorem~\ref{THM:main-AES-Tr-codegree} holds for $i \in [2,r-2]$ when $r\ge 4$. 
It should be noted that for $r \in \{5,6\}$, classical results by Frankl--F\"{u}redi~\cite{FF89} and an observation by Pikhurko in~\cite{Pik08} show that no such analogue exists for $i=1$.

For integers $\ell \ge r\ge 3$, the \textbf{expansion} $H_{\ell+1}^{r}$ of the complete graph $K_{\ell+1}$ is the $r$-graph obtained from $K_{\ell+1}$ by adding a set of $r-2$ new vertices into each edge, ensuring that different edges receive disjoint $(r-2)$-sets. This is a classical family of hypergraphs introduced by Mubayi in~\cite{Mub06} to extend the seminal Tur\'{a}n Theorem~\cite{TU41} to hypergraphs (see also~\cite{Pik13expansion}). 

A similar argument to that in the proof of Proposition~\ref{PROP:Tr-free-implies-Sigma-r-free} yields the following result. 
\begin{fact}\label{FACT:expansion-codegree}
    Let $\ell \ge r\ge 3$ be integers. 
    Suppose that $\mathcal{H}$ is an $r$-graph satisfying $K_{\ell+1}\not\subseteq \partial_{r-2}\mathcal{H}$ and $\delta_{r-1}^{+}(\mathcal{H}) > (r-2)\binom{\ell}{2}$. Then $\mathcal{H}$ is $H_{\ell+1}^{r}$-free. 
\end{fact}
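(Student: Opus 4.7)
The plan is to work by contrapositive: supposing $\mathcal{H}$ contains a copy of $H_{\ell+1}^{r}$, I will deduce $K_{\ell+1} \subseteq \partial_{r-2}\mathcal{H}$, contradicting the hypothesis. Let $v_1,\ldots,v_{\ell+1}$ denote the core vertices of the embedded $H_{\ell+1}^{r}$, and for each pair $1\le i<j\le \ell+1$ let $S_{ij}$ be the pairwise disjoint $(r-2)$-set of extras attached to $\{v_i,v_j\}$. The edge $\{v_i,v_j\}\cup S_{ij}\in \mathcal{H}$ certifies $\{v_i,v_j\}\in \partial_{r-2}\mathcal{H}$ by peeling off those $r-2$ extras one at a time, and applying this to every pair shows that $\{v_1,\ldots,v_{\ell+1}\}$ spans a copy of $K_{\ell+1}$ in $\partial_{r-2}\mathcal{H}$, yielding the contradiction.

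Observe that this direction does not actually invoke the codegree hypothesis; it unfolds directly from the definition of the iterated shadow. The bound $\delta_{r-1}^{+}(\mathcal{H}) > (r-2)\binom{\ell}{2}$ is most naturally seen in the companion direction that genuinely parallels Proposition~\ref{PROP:Tr-free-implies-Sigma-r-free}: namely, starting from $K_{\ell+1} \subseteq \partial_{r-2}\mathcal{H}$ one would promote the $\ell+1$ core vertices to a strict copy of $H_{\ell+1}^{r}$ in $\mathcal{H}$ with disjoint extras. The strategy would be greedy edge selection: for each pair $\{v_i,v_j\}$ fix an edge $e_{ij}\in\mathcal{H}$ containing it, and whenever an extra vertex $u$ of $e_{ij}$ coincides with a core vertex or with a previously committed extra, swap $u$ out by invoking the codegree bound on the $(r-1)$-face $e_{ij}\setminus\{u\}$ to produce a fresh replacement, precisely mirroring the minimisation-and-swap step in the proof of Proposition~\ref{PROP:Tr-free-implies-Sigma-r-free}. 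The main obstacle in that companion argument would be ordering the pairs and accounting carefully enough that the forbidden set at every swap has at most $(r-2)\binom{\ell}{2}$ elements, which is exactly the quantity appearing in the stated codegree bound.
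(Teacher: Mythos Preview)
Your contrapositive argument is correct and complete for the statement as it is written: an embedded copy of $H_{\ell+1}^{r}$ in $\mathcal{H}$ places each core pair $\{v_i,v_j\}$ inside an edge of $\mathcal{H}$, hence in $\partial_{r-2}\mathcal{H}$, so the core spans a $K_{\ell+1}$ there. You are also right that the codegree hypothesis is not used in this direction.

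The paper does not actually give a proof; it only asserts that ``a similar argument to that in the proof of Proposition~\ref{PROP:Tr-free-implies-Sigma-r-free} yields'' the result. Since Proposition~\ref{PROP:Tr-free-implies-Sigma-r-free} passes from the \emph{weaker} free-ness condition ($\mathrm{T}_r$-free) to the \emph{stronger} one ($\Sigma_r$-free) via the codegree bound, the intended content is almost certainly the converse implication --- that $H_{\ell+1}^{r}$-free together with large positive codegree forces $K_{\ell+1}\not\subseteq\partial_{r-2}\mathcal{H}$ --- which is also the direction one actually needs to derive Theorem~\ref{THM:main-AES-expansion-codegree} from Theorem~\ref{THM:main-AES-Tr-codegree}. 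Your second paragraph correctly anticipates this and lays out the right swap-and-minimise scheme in the spirit of Proposition~\ref{PROP:Tr-free-implies-Sigma-r-free}.

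One caveat worth recording about that converse sketch: with the na\"{\i}ve greedy ordering of pairs, by the time you reach the last of the $\binom{\ell+1}{2}$ pairs the forbidden set (the remaining $\ell-1$ core vertices together with the $(r-2)\bigl(\binom{\ell+1}{2}-1\bigr)$ extras already committed) already exceeds $(r-2)\binom{\ell}{2}$, so the stated threshold does not obviously close the argument. Either a sharper accounting or a somewhat larger threshold of the same order would be needed; this does not affect the validity of your proof of the Fact as literally stated, which stands.
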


Using Fact~\ref{FACT:expansion-codegree} and an argument analogous to the third remark after Theorem~\ref{THM:main-AES-Tr-codegree}, one can derive the following tight positive codegree Andr{\'a}sfai--Erd\H{o}s--S\'{o}s theorem for $H_{r+1}^{r}$. 
\begin{theorem}\label{THM:main-AES-expansion-codegree}
    For $n \ge (2r+1) (r-2)\binom{\ell}{2}/2$, every $n$-vertex $H_{r+1}^{r}$-free $r$-graph $\mathcal{H}$ with $\delta_{r-1}^{+}(\mathcal{H}) > \frac{2n}{2r+1}$ is $r$-partite.  
\end{theorem}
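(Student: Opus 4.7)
My plan is to reduce Theorem~\ref{THM:main-AES-expansion-codegree} to Theorem~\ref{THM:main-AES-Tr-codegree} by showing that, under the stated hypotheses, $\mathcal{H}$ is automatically $\mathrm{T}_r$-free. This is the higher-uniformity analogue of the argument sketched in the third remark after Theorem~\ref{THM:main-AES-Tr-codegree}, with $\partial$ upgraded to $\partial_{r-2}$ and $K_4$ to $K_{r+1}$. The key shadow observation is that $K_{r+1}\subseteq \partial_{r-2}\mathrm{T}_r$: inside the vertex set $\{1,\ldots,r+1\}$ of $\mathrm{T}_r$, every pair $\{i,j\}$ lies in at least one of the three edges $\{1,\ldots,r\}$, $\{1,\ldots,r-1,r+1\}$, $\{r,r+1,\ldots,2r-1\}$ (pairs inside $\{1,\ldots,r\}$ lie in the first, pairs $\{i,r+1\}$ with $i\in[r-1]$ lie in the second, and $\{r,r+1\}$ lies in the third). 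Hence every such pair belongs to $\partial_{r-2}\mathrm{T}_r$, so any $r$-graph $\mathcal{H}$ satisfying $K_{r+1}\not\subseteq \partial_{r-2}\mathcal{H}$ is $\mathrm{T}_r$-free.

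\textbf{Execution.} Given the hypotheses of Theorem~\ref{THM:main-AES-expansion-codegree}, I would first note that $n \ge (2r+1)(r-2)\binom{r}{2}/2$ combined with $\delta_{r-1}^{+}(\mathcal{H})> 2n/(2r+1)$ yields $\delta_{r-1}^{+}(\mathcal{H})>(r-2)\binom{r}{2}$, which matches the codegree hypothesis of Fact~\ref{FACT:expansion-codegree} at $\ell=r$. Invoking Fact~\ref{FACT:expansion-codegree} on the $H_{r+1}^{r}$-free hypergraph $\mathcal{H}$ then gives $K_{r+1}\not\subseteq \partial_{r-2}\mathcal{H}$, and the shadow observation above forces $\mathcal{H}$ to be $\mathrm{T}_r$-free. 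A short check shows $(2r+1)(r-2)\binom{r}{2}/2 \ge (r-1)(2r+1)/2$ for every $r\ge 3$ (it simplifies to $(r-2)r\ge 2$), so $n$ also satisfies the threshold of Theorem~\ref{THM:main-AES-Tr-codegree}; applying that theorem concludes $\mathcal{H}$ is $r$-partite.

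\textbf{Main obstacle.} All substantive content sits inside Fact~\ref{FACT:expansion-codegree}, whose proof (advertised to parallel that of Proposition~\ref{PROP:Tr-free-implies-Sigma-r-free}) must show that a large positive codegree in an $H_{\ell+1}^{r}$-free $r$-graph prevents $K_{\ell+1}$ from appearing in the $(r-2)$-shadow. The natural approach is greedy: assuming a shadow clique $K_{\ell+1}\subseteq \partial_{r-2}\mathcal{H}$, order its $\binom{\ell+1}{2}$ pairs and, for each pair in turn, use the minimum positive codegree to choose an $r$-edge of $\mathcal{H}$ extending it by an $(r-2)$-set disjoint from all vertices used previously, thereby assembling a copy of $H_{\ell+1}^{r}$ in $\mathcal{H}$; the threshold $(r-2)\binom{\ell}{2}$ is precisely the budget needed to avoid collisions at each step. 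Once Fact~\ref{FACT:expansion-codegree} is established, the reduction above is routine bookkeeping.
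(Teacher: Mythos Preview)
Your proposal is correct and follows exactly the route the paper intends: invoke Fact~\ref{FACT:expansion-codegree} at $\ell=r$ to pass from $H_{r+1}^{r}$-freeness to $K_{r+1}\not\subseteq\partial_{r-2}\mathcal{H}$, then use the shadow observation $K_{r+1}\subseteq\partial_{r-2}\mathrm{T}_r$ (the higher-uniformity analogue of the third remark) to deduce $\mathrm{T}_r$-freeness, and finish with Theorem~\ref{THM:main-AES-Tr-codegree} after the threshold check. One small caveat worth flagging: as literally written, Fact~\ref{FACT:expansion-codegree} states the trivial direction (no $K_{\ell+1}$ in the $(r-2)$-shadow already forces $H_{\ell+1}^{r}$-freeness without any codegree hypothesis); the nontrivial, intended statement---and the one you actually use and whose greedy proof you sketch---is the converse, that $H_{\ell+1}^{r}$-freeness together with $\delta_{r-1}^{+}(\mathcal{H})>(r-2)\binom{\ell}{2}$ forbids $K_{\ell+1}$ in $\partial_{r-2}\mathcal{H}$.
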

\section*{Acknowledgment}
We would like to thank Yixiao Zhang for his insightful comments, which led to a significant improvement of Theorem~\ref{THM:main-AES-Tr-codegree}. 
\bibliographystyle{alpha}
\bibliography{CodegreeHypergraphAES}
\end{document}